\renewenvironment{proof}[1][\proofname] {\par\pushQED{\qed}\normalfont\topsep6\p@\@plus6\p@\relax\trivlist\item[\hskip\labelsep\bfseries#1\@addpunct{.}]\ignorespaces}{\popQED\endtrivlist\@endpefalse}
\newcommand{\HH}{\mathcal{H}}
\newcommand{\xx}{\textbf{x}}
\newcommand{\yy}{\textbf{y}}
\newcommand{\RR}{\mathbb{R}}
\newcommand{\diam}{\operatorname{diam}}
\newtheorem{proposition}{Proposition}
\newtheorem{theorem}[proposition]{Theorem}
\theoremstyle{definition}
\title{A note on antichains in the continuous cube}
\author{Barnab\'{a}s Janzer\thanks{Department of Pure Mathematics and Mathematical Statistics, Centre for Mathematical Sciences, University of Cambridge, Wilberforce Road, Cambridge CB3 0WB, United Kingdom. Email: bkj21@cam.ac.uk}}
\date{\vspace{-21pt}}
\begin{document}
	\maketitle
	
\begin{abstract}
It is well-known that an antichain in the poset $[0,1]^n$ must have
measure zero. Engel, Mitsis, Pelekis and Reiher showed that in fact it must have $(n-1)$-dimensional Hausdorff measure at most $n$, and they conjectured that this bound can be attained.
In this note we show that, for every $n$, such an antichain does indeed exist.
\end{abstract}\vspace{-5pt}	

\section{Introduction}

We can partially order $[0,1]^n$ by writing $\xx\leq \yy$ if $x_i\leq y_i$ for each $i=1,\dots,n$. A subset $A\subseteq [0,1]^n$ is called an \textit{antichain} if it contains no two distinct elements $\xx,\yy$ with $\xx\leq \yy$. It is natural to ask for continuous generalisations of Sperner's theorem 
(which describes the largest possible size of an antichain in the discrete cube $\{0,1\}^n$, see e.g. \cite{bollobas1986combinatorics} for background). It is well-known that any antichain in $[0,1]^n$ must have Lebesgue measure zero. However, as shown by Engel, Mitsis, Pelekis and Reiher \cite{engel2018projection}, we can make interesting statements about the sizes of the antichains if we consider their Hausdorff measure. The definition and some properties of the Hausdorff measure are recalled in the Appendix, following \cite{engel2018projection}, to make the paper self-contained.

	Engel, Mitsis, Pelekis and Reiher \cite{engel2018projection} proved the following result.
	\begin{theorem}[Engel, Mitsis, Pelekis and Reiher \cite{engel2018projection}] \label{Thm_EMPR}
	Any antichain in $[0,1]^n$ has Hausdorff dimension at most $n-1$ and $(n-1)$-dimensional Hausdorff measure at most $n$. Moreover, these bounds cannot be improved (that is, there are antichains with $(n-1)$-dimensional Hausdorff measure arbitrarily close to $n$).
	\end{theorem}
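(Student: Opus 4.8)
The plan is to treat the two halves of the statement separately: the upper bounds (dimension at most $n-1$, measure at most $n$) via a projection argument, and the assertion that they cannot be improved via an explicit construction. The combinatorial fact I would lean on throughout is that coordinate-aligned points of an antichain are forced to be comparable: if $\xx,\yy\in A$ agree in every coordinate except the $i$-th, then one lies below the other, so each coordinate projection $\pi_i\colon[0,1]^n\to[0,1]^{n-1}$ (forgetting the $i$-th coordinate) is injective on $A$. The same monotonicity is what will control how axis-parallel lines meet the relevant surfaces.

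For the upper bound I would pass from $A$ to the down-set it generates, $D=\{\yy : \yy\le\xx \text{ for some }\xx\in A\}$. A maximality argument shows $A\subseteq\partial D$: if some $\xx\in A$ were interior to $D$, then a strictly larger point $\xx+\delta(1,\dots,1)$ would lie in $D$, hence below some element of $A$ strictly above $\xx$, contradicting the antichain property. The boundary of a down-set is a monotone hypersurface: its outer normal $\nu$ has nonnegative entries with $\sum_i\nu_i\ge\|\nu\|_2=1$, so $\nu$ is transverse to the main diagonal and $\partial D$ is the graph of a Lipschitz function over the hyperplane $\{\sum x_i=\text{const}\}$, hence rectifiable with finite $\HH^{n-1}$-measure. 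The key point is that, since $D$ is a down-set, every line parallel to a coordinate axis meets $\partial D$ at most once (moving up in $x_i$ one can only leave $D$), so the area formula gives $\int_{\partial D}|\nu_i|\,d\HH^{n-1}=\HH^{n-1}(\pi_i(\partial D))\le 1$. Combining this with $\|\nu\|_2=1\le\|\nu\|_1$ pointwise,
\[
\HH^{n-1}(A)\le \HH^{n-1}(\partial D)=\int_{\partial D}1\,d\HH^{n-1}\le\int_{\partial D}\sum_{i=1}^n|\nu_i|\,d\HH^{n-1}=\sum_{i=1}^n\HH^{n-1}(\pi_i(\partial D))\le n,
\]
and finiteness of the measure forces Hausdorff dimension at most $n-1$.

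For sharpness I would construct, for each $\varepsilon>0$, an antichain of measure at least $n-\varepsilon$ as the graph of a strictly decreasing function $h\colon[0,1]^{n-1}\to(0,1]$; such a graph is automatically an antichain. To make the measure large one wants the surface to be almost everywhere nearly perpendicular to a coordinate axis (so that $\|\nu\|_1\approx 1$ and the first inequality above is nearly tight) while projecting onto almost all of every coordinate hyperplane. This is achieved by concentrating essentially all of the descent of $h$ into thin cliffs near the ``upper'' faces $\{x_j=1\}$, for instance $h(\xx)=1-g(\max_j x_j)-\varepsilon\sum_j x_j$ with $g$ a steep ramp rising from $0$ to nearly $1$ as its argument approaches $1$; the term $-\varepsilon\sum_j x_j$ serves only to make $h$ strictly decreasing. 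A direct computation, using that $\max_j x_j$ has density $(n-1)t^{n-2}$ on $[0,1]$, shows the cliffs contribute $\approx n-1$ and the almost-flat bulk contributes $\approx 1$ to $\HH^{n-1}$, so the measure tends to $n$ as $\varepsilon\to 0$ and the ramp steepens.

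The step I expect to be most delicate is the geometric measure theory underlying the upper bound: verifying rigorously that $\partial D$ is rectifiable with the single-crossing property and that the area formula applies so as to yield the \emph{sharp} constant $n$, rather than merely the finiteness that the Lipschitz-graph description hands over immediately. A secondary subtlety, on the construction side, is that exactly axis-aligned facets are never antichains, so one must keep $h$ strictly monotone while letting the surface become nearly axis-aligned in the limit; the $\varepsilon$-perturbation (and choosing $g$ to stop just short of $1$, so that $h$ stays positive and no clipping is needed) is precisely what reconciles these competing demands.
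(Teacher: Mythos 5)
First, a point of context: the paper does not prove Theorem~\ref{Thm_EMPR} at all --- it is quoted from \cite{engel2018projection}, and the paper only proves Theorem~\ref{Thm_antichainconstruction} --- so there is no internal proof to compare yours against; your attempt has to be judged on its own. On that basis, your sharpness construction is essentially sound: the graph of a strictly decreasing Lipschitz function $h$ is an antichain, and the cliff/bulk computation of $\int\sqrt{1+|\nabla h|^2}$ does give measure tending to $n$. (The paper's own Theorem~\ref{Thm_antichainconstruction} does something stronger, achieving exactly $n$ with singular functions, but your construction suffices for the ``arbitrarily close'' claim.) The upper-bound half, however, contains a genuine error.

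The reduction from $A$ to the topological boundary $\partial D$ of the generated down-set fails. It is simply not true that every coordinate line meets $\partial D$ at most once, nor that the outer normal of $\partial D$ has nonnegative entries, nor that $\HH^{n-1}(\partial D)\le n$. Take $A=\{(1,\dots,1)\}$: then $D=[0,1]^n$, so $\partial D$ is the entire boundary of the cube, with $\HH^{n-1}(\partial D)=2n$; every coordinate line through the open cube meets it twice (once on a lower face, once on an upper face), the lower faces have outer normal $-e_i$, and your claimed identity $\int_{\partial D}|\nu_i|\,d\HH^{n-1}=\HH^{n-1}\bigl(\pi_i(\partial D)\bigr)$ reads $2=1$. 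The correct statement of the area formula is $\int_{\partial D}|\nu_i|\,d\HH^{n-1}=\int \#\bigl(\pi_i^{-1}(y)\cap\partial D\bigr)\,dy$, which without a single-crossing property bounds the projection's measure from \emph{below}, not above --- the wrong direction for your chain of inequalities. Note also that the set which genuinely has the single-crossing property, namely $A$ itself, need not be rectifiable, so the area formula cannot be applied to it directly; this is exactly why the problem is nontrivial. The repair is to replace $\partial D$ by the set of maximal points $\partial^{u}D=\{\xx\in\overline D:\xx+t\mathbf{1}\notin\overline D\ \text{for all } t>0\}$: then (i) $A\subseteq\partial^{u}D$ (your interior-point argument, run in $\overline D$), (ii) $\partial^{u}D$ lies on a Lipschitz graph over the diagonal hyperplane, hence is rectifiable, and (iii) almost every coordinate line meets $\partial^{u}D$ at most once --- but (iii) is not automatic either: it requires, e.g., observing that the relevant coordinatewise monotone height function has countably many jumps along each diagonal line, whence a.e.\ continuity by Fubini. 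With (i)--(iii) your chain of inequalities does yield the bound $n$ and the dimension bound; as written, though, your proof passes through an intermediate claim that is false.
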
 
They also conjectured that there are antichains in $[0,1]^n$ of $(n-1)$-dimensional Hausdorff measure exactly $n$. In this note, we construct such an antichain for each $n$.
\begin{theorem}\label{Thm_antichainconstruction}
	For every $n$ there is an antichain in $[0,1]^n$ of $(n-1)$-dimensional Hausdorff measure $n$.
\end{theorem}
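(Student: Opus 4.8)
The plan is to realise the antichain as the graph of a continuous function $h\colon[0,1]^{n-1}\to[0,1]$ that is strictly decreasing with respect to the coordinatewise partial order. Any such graph is automatically an antichain: if $(\mathbf{u},h(\mathbf{u}))\le(\mathbf{v},h(\mathbf{v}))$ with $\mathbf{u}\ne\mathbf{v}$, then $\mathbf{u}\le\mathbf{v}$ and $h(\mathbf{u})\le h(\mathbf{v})$, contradicting strict monotonicity of $h$. (Conversely, every antichain is of this form over its projection, since forgetting any one coordinate is injective on an antichain.) Thus the task reduces to building a single strictly decreasing $h$ whose graph $A$ satisfies $\mathcal{H}^{n-1}(A)=n$. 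By Theorem~\ref{Thm_EMPR} we automatically have $\mathcal{H}^{n-1}(A)\le n$, so it suffices to push this measure up to the extremal value; that forcing is the whole difficulty.

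The model is the case $n=2$. Let $s\colon[0,1]\to[0,1]$ be a strictly increasing continuous singular function (so $s'=0$ almost everywhere, yet $s$ is strictly increasing), and put $h=1-s$. The graph is a strictly decreasing curve, hence an antichain, and its length is $\int_0^1\sqrt{1+(s')^2}\,dx$ together with the singular variation of $s$, giving $1+1=2$. The features responsible are that the curve is ``flat'' (horizontal tangent) almost everywhere yet still descends a total height $1$ on a set of measure zero, and that both coordinate projections are onto. I would phrase this invariantly: decomposing the rectifiable graph according to its approximate normal, the part with normal $e_2$ projects onto the full segment $[0,1]$ in the $x_1$-direction with length preserved (contributing $1$), and the part with normal $e_1$ projects onto $[0,1]$ in the $x_2$-direction (contributing the other $1$).

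For general $n$ I would construct $h$ by a recursive, self-similar scheme designed so that the graph $A$ admits a decomposition $A=A_1\cup\dots\cup A_n$ (disjoint up to $\mathcal{H}^{n-1}$-null sets), where $A_i$ is the set of points at which the approximate normal is $\pm e_i$. Since $A_i$ is then perpendicular to $e_i$ almost everywhere, the orthogonal projection forgetting the $i$-th coordinate preserves $\mathcal{H}^{n-1}$ on $A_i$; the design goal is to arrange that this projection covers a full-measure subset of $[0,1]^{n-1}$, giving $\mathcal{H}^{n-1}(A_i)=1$ for each $i$ and hence $\mathcal{H}^{n-1}(A)=\sum_{i=1}^n 1=n$. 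Concretely this means $h$ must be singular, so that $\nabla h=0$ Lebesgue-almost-everywhere and the normal is axis-parallel $\mathcal{H}^{n-1}$-almost-everywhere, while every one-dimensional coordinate slice $t\mapsto h(\dots,t,\dots)$ must be a continuous surjection onto $[0,1]$ so that all $n$ projections are onto. I would realise $h$ as a uniform limit of dyadic approximations, alternating through the coordinate directions across scales and concentrating the entire variation of $h$ on a measure-zero set that is approached at the boundary faces of the cube.

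The main obstacle is a genuine tension between two of these requirements. Full projections force each slice of $h$ to run from $1$ down to $0$, i.e.\ $h\to1$ as any coordinate tends to $0$ and $h\to0$ as any coordinate tends to $1$; read literally on the boundary, this would make $h$ constant along entire faces, which destroys strict monotonicity and hence the antichain property. The resolution — and the step I expect to require the most care — is to keep $h$ strictly decreasing throughout the open cube while realising these limiting boundary values only in the limit, by concentrating the descent of $h$ on a measure-zero (Cantor-type) set at every scale; this is exactly what allows the normal to be axis-parallel almost everywhere with no genuine flat pieces. The remaining work is to verify rigorously that the limiting set is a closed antichain and to compute $\mathcal{H}^{n-1}(A)$ exactly, either through the normal-projection decomposition above or through the self-similar scaling relations of the construction, with Theorem~\ref{Thm_EMPR} supplying the matching upper bound.
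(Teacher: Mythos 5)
Your high-level skeleton matches the paper's proof: realise $A$ as the graph of a strictly decreasing function on the open cube built from a singular function, split $A$ into $n$ disjoint pieces $A_1,\dots,A_n$, show that the projection $\pi_i(A_i)$ has Lebesgue measure $1$ for each $i$, and invoke Theorem~\ref{Thm_EMPR} for the matching upper bound. But the proposal stops exactly where the content of the theorem begins: for $n\geq 3$ you never produce the function $h$. ``A recursive, self-similar scheme'' realised as ``a uniform limit of dyadic approximations alternating through the coordinate directions'' is a plan, not a construction, and none of the simultaneous requirements you impose (strict monotonicity in the coordinatewise order, singularity, surjectivity of every one-dimensional slice, and --- crucially --- that the singular variation is distributed among the directions so that \emph{each} of the $n$ pieces projects onto a set of full measure) is verified, or even precisely formulated, for any concrete object. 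That is the genuine gap: the whole difficulty is exhibiting one function for which all of this can actually be checked. The paper does it with a closed formula rather than a recursion: take a strictly increasing singular $f$ with $f'=0$ on a set $S$ of measure $1$, take the symmetric function $p(x_1,\dots,x_{n-1})=\prod_{i=1}^{n-2}x_{\sigma(i)}\big/\bigl(1-x_{\sigma(n-1)}+\prod_{i=1}^{n-2}x_{\sigma(i)}\bigr)$ (coordinates sorted by $\sigma$), which is strictly increasing and surjective in each variable and differentiable off the diagonals, and set $F=1-p(f(x_1),\dots,f(x_{n-1}))$. The decomposition is then by membership in $S$: the piece $A_n$ lies over $S^{n-1}$, and $A_i$ ($i<n$) lies over $\{\xx: x_j\in S \text{ for } j\neq i,\ x_i\notin S\}$; the full-measure claim for $i<n$ is proved on one-dimensional slices via the chain rule (the slice function $g_\xx$ has derivative zero on $S$ minus finitely many points, so it maps that set to a null set, and being surjective it maps $(0,1)\setminus S$ onto a set of measure $1$).

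A second, more technical problem is your verification route. Decomposing $A$ by the direction of its approximate normal, and claiming that the projection forgetting $x_i$ preserves $\HH^{n-1}$ on the piece with normal $\pm e_i$, presupposes that the graph is countably $(n-1)$-rectifiable, so that approximate normals exist $\HH^{n-1}$-a.e.\ and the area formula applies; for graphs built from singular functions with $n-1\geq 2$ this is neither obvious nor needed, and if rectifiability failed the decomposition would be meaningless. The paper needs only the elementary one-sided fact that $\pi_i$ is $1$-Lipschitz, hence $\HH^{n-1}(A_i)\geq \HH^{n-1}(\pi_i(A_i))$, the latter being Lebesgue measure in $\RR^{n-1}$; combined with disjointness of the $A_i$ and the upper bound of Theorem~\ref{Thm_EMPR}, this already forces $\HH^{n-1}(A)=n$. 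So even once a construction is supplied, you should replace the normal-bundle accounting by the Lipschitz projection inequality together with an explicit slice-wise computation of the projections.
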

	
\section{The construction}
	\begin{proof}[Proof of Theorem \ref{Thm_antichainconstruction}]
Let $f$ be a strictly increasing singular function from $[0,1]$ onto $[0,1]$, that is, a strictly increasing continuous surjective function $[0,1]\to [0,1]$ for which there is a set $S\subseteq (0,1)$ of measure $1$ such that $f$ is differentiable with derivative zero at all points of $S$. (See e.g. \cite{salem1943some} for the construction of such a function.) As noted by Engel, Mitsis, Pelekis and Reiher \cite{engel2018projection}, the graph of $1-f$ has Hausdorff measure $2$ (see \cite{foran1999length}), giving an example of an antichain with the required properties when $n=2$. We will generalise this construction for any $n\geq 2$. (The case $n=1$ is trivial.)\medskip
	
	First, we construct a function $p: (0,1)^{n-1}\to (0,1)$ with the following properties.
	\begin{enumerate}
		\item The function $p$ is differentiable at every point in $(0,1)^{n-1}$ except maybe when two of the coordinates are equal.
		
		\item Whenever $n-2$ coordinates are fixed, then the resulting function $(0,1)\to (0,1)$ is strictly increasing and surjective. In particular, if $\xx<\yy$ for some $\xx, \yy\in(0,1)^{n-1}$, then $p(\xx)<p(\yy)$. (We write $\xx<\yy$ if $\xx\leq\yy$ and $\xx\not=\yy$.)
	\end{enumerate}

	If $n=2$ then $p(x)=x$ works. For $n\geq 3$, we define $p$ as follows. Given $(x_1,\dots,x_{n-1})\in (0,1)^{n-1}$, let $\sigma$ be a permutation of $\{1,\dots,n-1\}$ such that $x_{\sigma(1)}\leq\dots\leq x_{\sigma(n-1)}$, and set
	$$p(x_1,\dots,x_{n-1})=\frac{\prod_{i=1}^{n-2}x_{\sigma(i)}}{1-x_{\sigma(n-1)}+\prod_{i=1}^{n-2}x_{\sigma(i)}}.$$
	It is clear that this satisfies the conditions above.
	
	Define $F: (0,1)^{n-1}\to (0,1)$ by
	$$F(x_1,\dots, x_{n-1})=1-p(f(x_1),f(x_2),\dots,f(x_{n-1})),$$
	and write $A$ for the graph of $F$. Note that, since $f$ is strictly increasing, Property 2 above shows that if $\xx<\yy$ then $F(\xx)>F(\yy)$ and so $A$ is an antichain. We show that $A$ has $(n-1)$-dimensional Hausdorff measure at least $n$. (The measure is then exactly $n$ by Theorem \ref{Thm_EMPR}.)
	
	Write $\pi_i:\RR^n\to\RR^{n-1}$ for the projection $\pi(x_1,\dots,x_n)=(x_1,\dots,x_{i-1},x_{i+1},\dots,x_n)$. We find $n$ disjoint subsets $A_1, \dots, A_n$ of $A$ such that $\pi_i(A_i)$ has measure $1$ for each $i$. This is sufficient, since the projections $\pi_i$ cannot increase the Hausdorff measure, so each $A_i$ must then have measure at least $1$.
	
	Recall that there is a subset $S\subseteq (0,1)$ of measure $1$ such that $f$ is differentiable with derivative zero at each point of $S$. We set
	$$B_n=S^{n-1}$$
	and, for $i=1, \dots, n-1$,
	$$B_i=\{\xx\in (0,1)^{n-1}: x_j\in S\textnormal{ for $j\not=i$ and }x_i\not\in S\}.$$

	Let $A_i$ be the piece of the graph of $F$ corresponding to domain $B_i$. Note that the $A_i$ are disjoint. It is clear that $\pi_n(A_n)=B_n$ has measure $1$. To show that $\pi_i(A_i)$ has measure $1$ for $i=1,\dots, n-1$, it suffices to consider the case $i=n-1$.
	
	Let $\xx=(x_1,\dots,x_{n-2})\in S^{n-2}$ be a point with all coordinates distinct, we show that the set $\{x_n\in (0,1):(x_1,\dots,x_{n-2},x_n)\in\pi_{n-1}(A_{n-1})\}$ has measure 1. (Since the set of such points $\xx\in S^{n-2}$ has measure 1, this then gives the result.) Write $T=\{x_1,\dots,x_{n-2}\}$. Consider the function $g_\xx: (0,1)\to (0,1)$ given by $g_\xx(t)=F(x_1,\dots,x_{n-2},t)$. Note that $g_\xx$ is strictly decreasing and surjective by the properties of $p$ and $f$. Moreover, whenever $t\in S\setminus{T}$, then the chain rule gives that $g_\xx$ is differentiable at $t$ with derivative zero. It follows that $g_\xx(S\setminus T)$ has measure $0$, and hence (using that $T$ is finite and $g_\xx$ is surjective onto $(0,1)$) $g_\xx((0,1)\setminus{S})$ has measure 1. But this says exactly that $\{x_n\in (0,1):(x_1,\dots,x_{n-2},x_n)\in\pi_{n-1}(A_{n-1})\}$ has measure 1, giving the result.
\end{proof}

We remark that when $n=3$, the function $p: (0,1)^2\to (0,1)$ has a nice geometric interpretation: $p(\xx)$ is given by the (first) coordinate of the projection of $\xx$ onto the diagonal $\{(t,t):t\in(0,1)\}$ from the point $(1,0)$ if $\xx$ is below the diagonal and from $(0,1)$ if it is above.
	
\bibliography{Bibliography}
\bibliographystyle{abbrv}	
	
\appendix
\section*{Appendix: The Hausdorff measure}
Following \cite{engel2018projection}, we recall the definition and some properties of the Hausdorff measure.

For a non-empty subset $U$ of $\RR^n$, let $\diam{U}$ denote its diameter. For any real number $s\geq 0$, write $\alpha_s=\frac{\pi^{s/2}}{2^s \Gamma(s/2+1)}$ for the volume of the $s$-dimensional sphere of radius $1/2$. For $s\geq 0$, $\delta>0$ and $A\subseteq \RR^n$, write
$$\HH_\delta^s(A)=\alpha_s \inf\left\{\sum_{i=1}^{\infty}{\diam(U_i)^s}: A\subseteq \bigcup_{i=1}^{\infty}U_i\textnormal{ and $\diam(U_i)\leq \delta$ for each $i$}\right\}.$$
Note that as $\delta$ decreases, the value of $\HH_\delta^s(A)$ increases, so we may set 
$$\HH^s(A)=\lim_{\delta\to 0^{+}}{\HH_\delta^s(A)}.$$
It can be shown that $\HH^s$ restricts to a measure, called the \textit{$s$-dimensional Hausdorff measure}, on a $\sigma$-algebra containing all the Borel measurable sets. Note that the scaling by $\alpha_s$ guarantees that the Hausdorff measure and the Lebesgue measure agree when $s=n$. Furthermore, it can be shown that
$$\dim_H{(A)}=\inf\{s\geq 0: \HH^{s}(A)=0\}$$
has the property that if $s<\dim_H(A)$ then $\HH^s(A)=0$ and if $s>\dim_H(A)$ then $\HH^s(A)=\infty$, so the only 'interesting' value of $\HH^s(A)$ occurs when $s=\dim_H(A)$. The value $\dim_H(A)$ is called the \textit{Hausdorff dimension} of $A$. See  \cite{engel2018projection} for references on the topic.
	
\end{document}